\newtheorem{theorem}{Theorem}[section]
\theoremstyle{definition}
\newtheorem{Definition}[theorem]{Definition}
\newenvironment{theorem*}[1]{\medskip
                            \noindent
                            {\bf Theorem #1. }\ %
                            \begingroup \sl}
                            {\endgroup\medskip}
\newtheorem{lemma}[theorem]{Lemma}
\newcommand\Prefix[3]{\vphantom{#3}#1#2#3}
\DeclareMathOperator{\Cham}{Cham}
\title{A Local-To-Global Result for Topological Spherical Buildings}
\author[$\mathrm{M^{\lowercase{c}}Callum}$]{Rupert $\mathrm{M^{\lowercase{c}}Callum}$}
\begin{document}

\address{%
University of M\"unster, 
Einsteinstrasse 62, 
48149 M\"unster, 
Germany}

\email{rupertmccallum@yahoo.com}

\date{January 30, 2012}

\maketitle

\begin{abstract}

Suppose that $\Delta, \Delta'$ are two buildings each arising from a semisimple algebraic group over a field, a topological field in the former case, and that for both the buildings the Coxeter diagram has no isolated nodes. We give conditions under which a partially defined injective chamber map, whose domain is the subcomplex of $\Delta$ generated by a nonempty open set of chambers, and whose codomain is $\Delta'$, is guaranteed to extend to a unique injective chamber map. Related to this result is a local version of the Borel-Tits theorem on abstract homomorphisms of simple algebraic groups. 

\end{abstract}

\bigskip

\noindent Keywords: topological geometry, local homomorphism, topological building, Borel-Tits theorem \newline
Mathematicsl Subject Classification 2000: 51H10

\section{Introduction}

 Throughout the history of Lie theory there has been a notion of ``local isomorphism". Indeed, the original notion of Lie groups considered by
Sophus Lie in \cite{LieEngel1888} was an essentially local one. Suppose that $G$ is an algebraic group defined over a Hausdorff topological field $k$. One may consider the notion of a local $k$-isogeny from the group $G$ to a group $G'$ defined over $k$ of the same dimension. This is a mapping defined on a nonempty open neighbourhood of the identity of $G(k)$ in the strong $k$-topology, (see Definition \ref{strongtopology}), which ``locally" acts as a $k$-isogeny, in the sense that it is a local $k$-homomorphism and its range is Zariski dense. As far as I know this notion has not been investigated systematically. In this paper we shall produce a local version of the Borel-Tits theorem which hints at the possibility that it may be fruitful to investigate this notion.

\bigskip

The Borel-Tits result was used by Mostow \cite{Mostow73} to prove his ``strong rigidity theorem", and also underpins the work of Margulis on ``superrigidity" \cite{Margulis91}. More recently, using Tanaka's theory of prolongations of maps of filtered structures on manifolds \cite{Tanaka70}, Yamaguchi \cite{Yamaguchi93} showed that smooth local maps preserving the fibrations in a Tits building in the real case must arise form the the action of the associated semisimple group. This suggests that there might be a ``local rigidity theorem" for buildings which does not require any assumptions of smoothness, or even continuity, and in this paper we give a local-to-global result for the buildings of semisimple algebraic groups which establishes this. We now proceed to the description of the results.

\bigskip

There are two famous theorems of Jacques Tits which are discussed in \cite{Tits74} which say, roughly, that a thick spherical building is completely determined by a very small portion of it. The first theorem is as follows.

\begin{Definition} Suppose that $\Delta$ is a building and $C$ is a chamber of $\Delta$, and that $n$ is a natural number. We define $E_{n}(C):=\{D\in$ Cham $\Delta\mid$ codim $(C\cap D)\leq n\}$. \end{Definition}

\begin{theorem}[Rigidity theorem] \label{rigiditytheorem} Let $\Delta$ be a thick spherical building, and let $C, C'$ be opposite chambers of $\Delta$. If an automorphism $\varphi$ of $\Delta$ fixes $E_{1}(C)\cup\{C'\}$ pointwise, then $\varphi$ is the identity. \end{theorem}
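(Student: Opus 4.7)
My plan is to use the opposite pair $(C,C')$ as an anchor. First I would fix the unique apartment $\Sigma = A(C,C')$ they span. Since $\Delta$ is spherical, $C$ and $C'$ lie in a unique common apartment $\Sigma$, and because $\varphi$ fixes both $C$ and $C'$, uniqueness forces $\varphi(\Sigma) = \Sigma$. The restriction $\varphi|_\Sigma$ is an automorphism of the Coxeter complex $\Sigma$ fixing the chamber $C$ together with every chamber of $\Sigma$ adjacent to $C$ (all contained in $E_1(C)$). Since a Coxeter-complex automorphism is determined by its action on a chamber together with its labelled neighbours, $\varphi|_\Sigma = \mathrm{id}$.

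Next I would reduce to lower-rank residues by induction on the rank $n$ of $\Delta$. The case $n=1$ is trivial since $E_1(C)$ then consists of all chambers. For $n \geq 2$ and any proper non-empty face $\sigma$ of $C$, the residue $R = \mathrm{Res}(\sigma)$ is a thick spherical building of rank strictly less than $n$, preserved by $\varphi$ because $\sigma \subset C$ is fixed pointwise. The restriction $\varphi|_R$ fixes $C$, all chambers of $R$ adjacent to $C$ in $R$ (a subset of $E_1(C)$), and the projection $\mathrm{proj}_R(C')$, which is opposite $C$ within $R$ by the standard compatibility of opposition with projection onto a residue. By induction $\varphi|_R = \mathrm{id}$, and ranging $\sigma$ over all proper non-empty faces of $C$, this forces $\varphi(D) = D$ for every chamber $D$ with $C \cap D \neq \emptyset$.

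For a chamber $D$ with $C \cap D = \emptyset$, I would extend a minimal gallery from $C$ to $D$ to a minimal gallery terminating at some chamber $C''$ opposite $C$, placing $D$ in the unique apartment $A(C,C'')$. Once $\varphi(C'') = C''$ is known, repeating the apartment argument for the pair $(C,C'')$—whose relevant neighbours of $C$ still lie in $E_1(C)$—gives $\varphi|_{A(C,C'')} = \mathrm{id}$ and in particular $\varphi(D) = D$. So it suffices to fix every chamber opposite $C$. I would achieve this by connecting $C'$ to an arbitrary opposite $C''$ via a chain of chambers opposite $C$ that are pairwise adjacent (the ``opposition graph'' on opposites of $C$ is connected in a thick spherical building) and propagating fixity along the chain, using the preserved Weyl distances from $C$ and $C'$ together with the chambers fixed in the previous step. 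The hardest part will be precisely this propagation: thickness produces several chambers at each common panel between consecutive opposites, and the Weyl distance from $C'$ only distinguishes the one closest to $C'$, so pinning down each new opposite requires combining the fixity of the reference apartment $\Sigma$, the residue-fixing already established, and a careful analysis of the rank-two residues crossing the shared panel.
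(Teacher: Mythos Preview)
The paper does not itself prove this statement; it is quoted as a theorem of Tits, and inside the proof of Theorem~\ref{maintheorem} the reader is referred to \cite{Abramenko08}, Section~5.9, for the argument. I therefore compare your proposal with the standard proof found there.

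Your Steps~1--3 are correct (Step~2 is valid but will turn out to be unnecessary). The genuine gap is Step~4, which you yourself flag as the hard part without resolving it. The tools you propose do not close it. Write $P'=C_1\cap C_2$ for the panel shared by a fixed opposite $C_1$ and the next opposite $C_2$. In the irreducible case of rank $\geq 2$, every chamber of $\mathrm{St}(P')$ has Weyl distance $w_0$ or $w_0 s$ from $C$, and neither element lies in a proper standard parabolic $W_{S\setminus\{t\}}$; hence no chamber of $\mathrm{St}(P')$ meets $C$, so the chambers produced in Step~2 are entirely absent here. The rank-two residues across $P'$ likewise contain no chamber of $E_1(C)$, and, as you note, the Weyl distance from $C'$ singles out only $\mathrm{proj}_{P'}C'$. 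Nothing on your list distinguishes $C_2$ from the other chambers on $P'$ that are opposite $C$. The missing idea --- and this is exactly the heart of the argument in \cite{Abramenko08} --- is to project onto the panel $P$ of $C$ \emph{opposite} $P'$: since $P$ and $P'$ are opposite panels, $\mathrm{proj}_{P}$ is a bijection $\mathrm{St}(P')\to\mathrm{St}(P)\subseteq E_1(C)$ commuting with $\varphi$ (both $P$ and $P'$ being fixed), and as $E_1(C)$ is fixed pointwise by hypothesis, so is $\mathrm{St}(P')$. This one observation shows that whenever $\varphi$ fixes a chamber opposite $C$ it already fixes all of that chamber's neighbours, so your propagation along adjacent opposites becomes immediate and Step~3 finishes the proof; Step~2 is then never used.
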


The next theorem gives circumstances under which a bijection from a small part of a building to a small part of another building extends to a building isomorphism.

\begin{theorem}[Extension theorem] Let $\Delta, \Delta'$ be thick irreducible spherical buildings of rank at least 3. Let $\varphi:E_{2}(C)\rightarrow E_{2}(C')$ be an adjacency-preserving bijection for some $C\in\Delta$ and $C'\in\Delta'$. Then $\varphi$ extends to an isomorphism $\Delta\rightarrow\Delta'$. \end{theorem}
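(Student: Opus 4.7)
The plan is to extend $\varphi$ outward from $C$ one shell at a time, each extension being forced by the data already in place. The starting observation is that $E_{2}(C)$ coincides with the union of all rank 2 residues through $C$, since $\mathrm{codim}(C\cap D)\leq 2$ precisely when $C$ and $D$ lie in a common residue of rank at most two. Because $\varphi$ is an adjacency-preserving bijection, it restricts to an isomorphism of chamber systems from each such residue onto a rank 2 residue through $C'$; in this way $\varphi$ already records the complete local ``foundation'' of $\Delta$ at $C$ in the sense of Tits.

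Next I would extend $\varphi$ to $E_{2}(D)$ for every chamber $D$ that is $k$-adjacent to $C$. Fix distinct types $i,j$ different from $k$; the rank 3 residue $R$ of type $\{i,j,k\}$ containing $C$ and $D$ is a thick spherical building of rank 3, and $E_{2}(C)\cap R$ contains all three rank 2 sub-residues of $R$ through $C$. These three residues generate $R$ as a chamber system through compositions of projectivities among the panels at $C$, and a standard rigidity argument for thick spherical buildings of rank 3 shows that $\varphi|_{E_{2}(C)\cap R}$ extends uniquely to an isomorphism of $R$ onto the corresponding rank 3 residue through $C'$. This forces $\varphi$ on the rank 2 residues through $D$ of types $\{i,k\}$ and $\{j,k\}$; since the rank is at least 3, varying $i,j$ covers every rank 2 residue through $D$ whose type set meets $\{k\}$, and irreducibility of the Coxeter diagram lets one handle the remaining types by one additional hop through an adjacent panel type.

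The main obstacle is consistency of these local extensions: two different rank 3 residues may both prescribe $\varphi$ on a common rank 2 sub-residue through $D$, and the prescriptions must coincide. This reduces to a local uniqueness statement inside each generalised $m_{ij}$-gon, namely that a type-preserving automorphism fixing one chamber together with one panel through it is the identity, a consequence of thickness. The required fixed data are already pinned down by $\varphi|_{E_{2}(C)}$, so consistency holds. Iterating the construction, with $D$ taking over the role of $C$ at each stage, and using that $\Delta$ is spherical so every chamber is reached in finitely many steps, produces an extension $\tilde\varphi:\Delta\rightarrow\Delta'$ as a map of chamber systems. Running the same procedure on $\varphi^{-1}$ yields a two-sided inverse, so $\tilde\varphi$ is an isomorphism of buildings. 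Uniqueness follows from the Rigidity Theorem above: any two extensions agree on $E_{1}(C)$, and since the rank 3 rigidity used at each stage also determines the value on any preselected chamber opposite $C$, the two extensions must coincide.
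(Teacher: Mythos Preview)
The paper does not prove this statement. The Extension Theorem is quoted in the Introduction as one of ``two famous theorems of Jacques Tits which are discussed in \cite{Tits74}'', alongside the Rigidity Theorem, purely as motivation for the paper's own local-to-global result (Theorem~\ref{maintheorem}). No argument for it is given anywhere in the text, so there is nothing in the paper to compare your proposal against.

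As to the proposal itself: the overall strategy --- identify $E_2(C)$ with the union of rank~2 residues at $C$, push the map out through rank~3 residues, check consistency, and iterate --- is indeed the shape of Tits's original argument. But two steps are not supported as written. First, the phrase ``a standard rigidity argument for thick spherical buildings of rank~3 shows that $\varphi|_{E_2(C)\cap R}$ extends uniquely to an isomorphism of $R$'' is precisely the hard core of Tits's proof; it is not a lemma one can invoke, it is the theorem in the irreducible rank~3 case, and its proof occupies a substantial portion of \cite{Tits74}. Second, the consistency step rests on the claim that in a generalised $m_{ij}$-gon ``a type-preserving automorphism fixing one chamber together with one panel through it is the identity''. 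This is false: the stabiliser of a chamber in a thick generalised polygon is large (for a Moufang polygon it contains the full unipotent radical of a Borel), and fixing one additional panel does not force triviality. The correct local uniqueness input is the Rigidity Theorem itself, which requires fixing $E_1$ of a chamber together with an opposite chamber, and arranging that this data is available at each stage is part of what makes the genuine proof delicate.
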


This paper proves a similar kind of theorem for the topological spherical building associated with a semisimple algebraic group $G$ over a non-discrete Hausdorff topological field $k$, provided that the Coxeter diagram of the building has no isolated nodes. First we must give the definition of topological spherical building we are using. Our definition is taken from \cite{Kramer02}. Another definition of topological spherical building is given in \cite{Burns87} but it is now known that this is not a good definition to use in the non-compact case, because, for example, given a topology that satisfies the definition one can easily enlarge the topology in an arbitrary way and the definition is still satisfied.

\begin{Definition} Suppose that $\Delta$ is a spherical building of type $(W,S)$, considered as a simplicial complex. Suppose that for each $s\in S$ we are given a Hausdorff topology on the set of vertices of type $s$. We thereby obtain, for each $J\subseteq S$, a Hausdorff topology on the simplices of type $J$, by viewing the simplices of type $J$ as $J$-tuples of vertices and taking the subspace topology arising from the product topology. Given any two subsets $J, K \subseteq S$, consider the set $\mathfrak{D}^{J,K}:=\{(X,Y)\mid X$ has type $J,$ $Y$ has type $K,$ and there exist opposite chambers containing $X$ and $Y\}$ and the mapping $\mathfrak{D}^{J,K}\rightarrow$ Cham $\Delta, (X,Y)\mapsto\mathrm{proj}_{X} Y$. Suppose that all of these mappings are continuous. Then the building is said to be a topological spherical building. \end{Definition}

We will prove in Section 2 that if $G$ is a semisimple algebraic group over a Hausdorff topological field $k$, then the building $\Delta:=\Delta(G,k)$ is always a topological spherical building.

\bigskip

Now, suppose that $\Delta$ is a topological spherical building arising from a semisimple algebraic group in this way, with Coxeter system $(W,S)$, and suppose that the Coxeter diagram of $\Delta$ has no isolated nodes, so that the building is strictly Moufang in the sense of \cite{Abramenko08}. We will now define a certain base for the topology on the set of chambers.

\begin{Definition} \label{basicopen} Suppose that $C, C'$ are two opposite chambers in $\Delta$. For each $s\in S$ let $U_{s}$ be the root group corresponding to the root of the apartment containing $C$ and $C'$ which does not contain $C$ but which is attached to $C$ along a boundary panel of cotype $\{s\}$. There is a topology on $U_{s}$ arising from the topology on the set of all chambers which are attached to the root along this panel. For each $s\in S$, let $N_{s}$ be an open neighbourhood of the identity in $U_{s}$ with respect to this topology. Let $w_{0}$ be the longest word in the Coxeter group $(W,S)$, with a fixed reduced decomposition $s_{1}s_{2}\ldots s_{n}$. Let $U$ be the set of all chambers which arise from $C$ under the action of the set $N_{s_{1}}N_{s_{2}}\ldots N_{s_{n}}$. Then $U$ is said to be a basic open set, or a basic open set with respect to the pair of opposite chambers $(C,C')$. The empty set is also said to be a basic open set. \end{Definition}

We will prove in Section 2 that every basic open set is indeed an open set and that in this way a base for the topology on the set of chambers is defined.

\begin{Definition} \label{quasiconnected} Suppose that $U$ is an open set of chambers in $\Delta$. Suppose that, given any two chambers $C, D \in U$, there exists a sequence $(U_{1}, U_{2}, \ldots U_{r})$ of basic open subsets of $U$, such that for each integer $i$ such that $1\leq i<r$ we have $U_{i}\cap U_{i+1}\neq\emptyset$, and $C\in U_{1}, D\in U_{r}$. Then $U$ is said to be quasi-connected. \end{Definition}

In the case where the field $k$ from which the building $\Delta$ arises is equal to $\mathbb{R}$ or $\mathbb{C}$, it can be shown that every open connected set is quasi-connected.

\bigskip

We can now state our result. 

\begin{theorem} [The Main Theorem] \label{maintheorem} Suppose that $G$ (resp. $G'$) is a semisimple algebraic group defined over a field $k$ (resp. $k'$). Suppose that the $k$-rank of $G$ is equal to the $k'$-rank of $G'$. Suppose that $k$ is a non-discrete Hausdorff topological field. Let $\Delta:=\Delta(G,k), \Delta':=\Delta(G',k')$. Suppose that the Coxeter diagrams of $\Delta, \Delta'$ have no isolated nodes. Suppose that $U$ is a nonempty open quasi-connected subset of $\mathrm{Cham}$ $\Delta$ and $\Delta(U)$ is the subcomplex of $\Delta$ consisting of all faces of members of $U$. Then an injective chamber map $\Delta(U)\rightarrow\Delta'$ has a unique extension to an injective chamber map $\Delta\rightarrow\Delta'$. \end{theorem}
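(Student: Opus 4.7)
The plan is to split the argument into existence and uniqueness, and in particular to reduce existence to the statement that a local abstract homomorphism between root groups of $G$ and $G'$ extends to a global one (a local version of Borel--Tits). Quasi-connectedness of $U$ is then used to glue the locally obtained extensions into a single global injective chamber map.

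For uniqueness, suppose $\psi_1,\psi_2:\Delta\to\Delta'$ are two injective chamber extensions of $\varphi$. Given $C\in U$, the openness of $U$ produces a basic open set $B\subseteq U$ containing $C$; by the construction of a basic open set, the $N_s$-orbits of $C$ ensure that for every panel $\pi$ of $C$ the set $U$ contains many chambers adjacent to $C$ through $\pi$. Agreement of $\psi_1$ and $\psi_2$ on these chambers, together with injectivity, forces $\psi_1=\psi_2$ on $E_1(C)$. A further argument using the continuity of the projection maps $\mathrm{proj}_X Y$ appearing in the definition of topological spherical building pins down the common value on a chosen chamber $C'$ opposite to $C$. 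Hence $\psi_1,\psi_2$ agree on $E_1(C)\cup\{C'\}$, and the Rigidity Theorem (Theorem \ref{rigiditytheorem}) forces $\psi_1=\psi_2$ on all of $\Delta$.

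For existence, fix $C\in U$ together with a basic open neighborhood $B\subseteq U$ built from an opposite chamber $C'$ and small neighborhoods $N_s$ of the identity in each root group $U_s$. Because $\varphi|_B$ preserves adjacency and because chambers of $B$ are parametrized by tuples $(n_{s_1},\ldots,n_{s_n})$ with $n_{s_i}\in N_{s_i}$, the map $\varphi$ induces injective local maps $\nu_s : N_s\to U'_{\alpha(s)}$ between root groups of $\Delta$ and $\Delta'$, where the bijection $s\mapsto\alpha(s)$ of simple types is forced by the equal $k$-rank hypothesis and by the absence of isolated nodes in the Coxeter diagram. The Moufang commutator relations restricted to $B$ imply that each $\nu_s$ is a local group homomorphism and that together they satisfy the Steinberg commutator relations. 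Since $N_s$ is a nonempty open subset of $U_s$ and $k$ is a non-discrete Hausdorff topological field, $N_s$ is Zariski-dense in $U_s$; a local Borel--Tits rigidity then extends each $\nu_s$ uniquely to an algebraic $k$-morphism $U_s\to U'_{\alpha(s)}$, and assembling these yields a group homomorphism $G(k)\to G'(k')$ whose induced chamber map $\widetilde\varphi:\Delta\to\Delta'$ extends $\varphi|_B$. Injectivity of $\widetilde\varphi$ follows from injectivity of $\varphi|_B$, since a non-injective morphism between simple algebraic groups would already drop rank on $B$.

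The main obstacle is precisely this local Borel--Tits step: one must extend an abstract local homomorphism of root groups over a possibly very weakly topologized field $k$ to a global algebraic morphism, and do so in a way that preserves every Steinberg commutator relation simultaneously. Once this is established, quasi-connectedness of $U$ finishes the argument. For two overlapping basic open subsets $B_1,B_2\subseteq U$, the two candidate extensions agree on $B_1\cap B_2$ and hence globally by the uniqueness already proved; chaining along a quasi-connecting sequence of basic open sets shows that the extension is independent of all choices and agrees with $\varphi$ on the whole of $\Delta(U)$.
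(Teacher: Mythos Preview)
Your overall architecture matches the paper's: reduce to a single basic open set, convert $\varphi$ into local homomorphisms between root groups, extend these to a global homomorphism $G^{+}\to G'^{+}$, and then use quasi-connectedness plus rigidity to glue and to get uniqueness. The gap is exactly at the point you flag as ``the main obstacle'': you invoke a ``local Borel--Tits rigidity'' to extend each $\nu_s$ to a global map, but this is precisely what the paper is proving here, not an input it is allowed to assume. The paper's mechanism for this step is concrete and is the heart of the argument: pick a one-dimensional $k$-split torus $S$ inside the rank-one subgroup $\langle U_{(\alpha)},U_{(-\alpha)}\rangle$; its conjugation action makes $U_{\alpha}$ (and $U_{-\alpha}$) into a $k$-vector space, so that $\psi$ restricted to a neighbourhood of $1$ in $S$ encodes a \emph{local field homomorphism} $k\to k'$. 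Since local field homomorphisms extend uniquely to global ones, one recovers a global homomorphism on each rank-one piece, and these assemble to a global $\psi:G^{+}\to G'^{+}$. Without this torus/field-homomorphism step, Zariski density of $N_s$ in $U_s$ does not by itself promote an abstract local map to a global one.

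Two smaller points. First, the extension you obtain is in general \emph{not} an ``algebraic $k$-morphism $U_s\to U'_{\alpha(s)}$'': it factors through an abstract field homomorphism $k\to k'$ (possibly a Frobenius, possibly wild), exactly as in the global Borel--Tits theorem. Second, the paper does not construct $\nu_s$ by reading off coordinates from the parametrisation of $B$; it defines $g^{*}:=\varphi\circ g\circ\varphi^{-1}$ on $\varphi(U_1)$ for $g\in W_s$ and then proves, via a local form of the rigidity theorem, that $g^{*}$ is the restriction of a genuine root-group element of $\Delta'$. That lemma is what guarantees the induced maps land in root groups and are homomorphisms; your ``commutator relations restricted to $B$'' sketch would need to reproduce this. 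Your uniqueness paragraph is essentially fine once you note (as the paper does) that the rigidity theorem's proof adapts to show an injective chamber map is determined by its effect on $E_{1}(D)\cup\{C\}$ for any opposite pair $(C,D)$ in its domain.
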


\bigskip

This theorem generalises results given in \cite{McCallum09}.\footnote{It should be noted that Chapter 6 of \cite{McCallum09} used a different definition of ``quasi-connected", and the theorems of Chapter 6 of \cite{McCallum09} in which that term appears are actually not true as they stand. For example, if we consider the disjoint union of the interiors of two squares separated by a distance of one unit, and if the slopes of the line segments are allowed to range from -1 to 1, then it is easy to see that a mapping defined on that domain which preserved line segments with slope in that range could be a disjoint union of two distinct projective transformations. But if some requirement were made on the set of slopes that it should match the set of points in appropriate way, which would ensure that the corresponding set of chambers in the building is gallery connected, as is clearly the case with our ``quasi-connected" sets defined here, then the theorems would come out true.} Some of the geometries considered in \cite{McCallum09} are unitals rather than buildings. It would appear that the result generalises to this context as well and we hope to explore that further in future work. In the case of the general linear group, a version of the theorem requiring smoothness of the mapping has appeared in \cite{Bertini24}, and a version requiring continuity in the real and complex case has appeared in \cite{Shiffman95}. A version which does not require continuity appears in \cite{Cap07}.

\bigskip

Related to Theorem \ref{maintheorem}, a result about topological buildings, is a group-theoretic result; a local version of the Borel-Tits theorem on abstract homomorphisms of simple algebraic groups, which appears in \cite{Tits73}. In that paper Borel and Tits prove the following result. 

\bigskip

First we give the definition of a special isogeny. 

\begin{Definition} If $G, G'$ are two absolutely almost simple algebraic groups defined over a field $k$ and $\varphi:G\rightarrow G'$ is an isogeny, and $T$ is a maximal torus of $G$ and $T'=\varphi(T)$, then the restriction of $\varphi$ to $T$ induces a homomorphism $\varphi^{*}:X^{*}(T')\rightarrow X^{*}(T)$ of the character group of $T'$ into that of $T$, and the isogeny $\varphi$ is said to be special if $\varphi^{*}$ maps the short roots of $G'$ onto roots of $G$. \end{Definition}

Now we give the statement of the Borel-Tits theorem.

\begin{theorem}[Borel-Tits theorem] \label{BorelTits} Suppose that $G$ (resp. $G'$) is a connected affine algebraic group over a field $k$ (resp. $k'$), and that $G$ is absolutely almost simple and $G'$ is absolutely simple and adjoint.  Denote by $G^{+}$ the subgroup of $G(k)$ generated by all $k$-rational points of unipotent radicals of $k$-parabolic subgroups. Then, given an abstract group homomorphism $\alpha:G^{+}\rightarrow G'(k')$ whose range is Zariski dense in $G'$, there exists a field homomorphism $\varphi:k\rightarrow k'$ and a special isogeny $\beta:\Prefix^{\varphi}{G}\rightarrow G'$ such that $\alpha=\beta\circ\varphi^{\circ}\mid_{G^{+}}$. \end{theorem}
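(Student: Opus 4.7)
The plan is to recover the field homomorphism $\varphi$ and the algebraic morphism $\beta$ from the abstract data of $\alpha$ by analysing its restriction to root subgroups. First I would invoke the fact that $G^{+}$ is generated by the $k$-points $U_{a}(k)$ of the $k$-root subgroups of $G$ as $a$ ranges over the $k$-roots of a fixed maximal $k$-split torus; this follows from the Bruhat decomposition and the description of unipotent radicals of $k$-parabolic subgroups in terms of root groups. It therefore suffices to analyse $\alpha$ on each $U_{a}(k)$ separately and then establish coherence across the different root groups.

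Next, fix a long reduced $k$-root $a$, so that $U_{a}\cong\mathbb{G}_{a}$ as a $k$-group (non-reduced cases require filtering $U_{a}$ by a composition series of vector groups, but the strategy is the same). Then $U_{a}(k)\cong(k,+)$ and $\alpha$ yields an abstract group homomorphism $(k,+)\to G'(k')$ whose image generates, in the Zariski topology, a connected commutative unipotent subgroup of $G'$. By exploiting that $\alpha$ intertwines centralisers --- in particular, the centraliser of $U_{a}(k)$ in $G(k)$ contains a rich torus-plus-unipotent structure whose image must have a correspondingly rich centraliser in $G'(k')$ --- one shows that this Zariski closure lies in, and is dense in, a single $k'$-root subgroup $U_{a'}(k')$ of $G'$. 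This yields both a map $a\mapsto a'$ from $k$-roots of $G$ to $k'$-roots of $G'$ and, after a fixed choice of Chevalley parametrisations, an additive map $\varphi_{a}:k\to k'$ with $\alpha(x_{a}(t))=x_{a'}(\varphi_{a}(t))$.

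The crux of the proof --- and the main obstacle --- is to promote $\varphi_{a}$ to a ring homomorphism, to verify that it is independent of $a$, and to show that $a\mapsto a'$ arises from a \emph{special} similitude of root data. I would proceed by selecting pairs of $k$-roots $a,b$ spanning a rank-$2$ subsystem and applying $\alpha$ to the Chevalley commutator formula $[x_{a}(s),x_{b}(t)]=\prod_{ia+jb\in\Phi}x_{ia+jb}(C^{a,b}_{ij}s^{i}t^{j})$; comparison with the analogous identity in $G'$ forces both multiplicativity $\varphi_{a}(st)=\varphi_{a}(s)\varphi_{a}(t)$ and the compatibility $\varphi_{a}=\varphi_{b}$ up to Chevalley renormalisation, while simultaneously pinning the combinatorial map $a\mapsto a'$ down to a similitude that sends short roots to roots. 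The flexibility to allow a special rather than an ordinary isogeny is exactly what is required to accommodate the Frobenius-twisted rank-$2$ identifications that occur for types $B_{n},C_{n},F_{4},G_{2}$ in characteristics $2$ and $3$; the rank-$2$ analysis should show that these are the only extra possibilities.

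Finally, once $\varphi:k\to k'$ and the root correspondence have been produced coherently, I would assemble $\beta:\Prefix^{\varphi}{G}\to G'$ by checking that the assignment $x_{a}(t)\mapsto x_{a'}(\varphi(t))$ together with the induced map on a maximal torus respects the Steinberg presentation of the simply connected cover of $\Prefix^{\varphi}{G}$. Because $G'$ is adjoint, the resulting abstract homomorphism descends from this cover to an algebraic morphism $\Prefix^{\varphi}{G}\to G'$, and the Zariski density hypothesis on $\alpha(G^{+})$ together with the almost simplicity of $G$ and simplicity of $G'$ force this morphism to be surjective with finite kernel --- that is, a special isogeny --- which by construction satisfies $\alpha=\beta\circ\varphi^{\circ}\mid_{G^{+}}$.
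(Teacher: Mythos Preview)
The paper does not contain a proof of this theorem. Theorem~\ref{BorelTits} is stated in the introduction as background: it is the classical Borel--Tits theorem, and the paper explicitly attributes it to \cite{Tits73} (``In that paper Borel and Tits prove the following result''). What the present paper actually proves is Theorem~\ref{localBorelTits}, a \emph{local} version, and even there the argument consists essentially of the observation that the root-group-to-root-group step from \cite{Tits73} goes through unchanged in the local setting, together with the material developed in the proof of Theorem~\ref{maintheorem}. There is therefore no ``paper's own proof'' of Theorem~\ref{BorelTits} to compare your proposal against.

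That said, your outline is broadly faithful to the strategy of the original Borel--Tits argument: reduce to root subgroups via generation of $G^{+}$, show that $\alpha$ sends root groups into root groups, extract additive maps $\varphi_{a}:k\to k'$ from the parametrisations, use the rank-two commutator relations to force multiplicativity and independence of $a$, and then assemble the isogeny from the Steinberg presentation. The one place where your sketch is thinnest is precisely the step the paper singles out as the substantive one --- proving that $\alpha(U_{a}(k))$ actually lands in a single root subgroup of $G'$. Your appeal to ``intertwining centralisers'' is the right heuristic, but in the actual Borel--Tits proof this requires nontrivial work with the Zariski closure of $\alpha(G^{+})$ and the structure theory of $G'$; it is not a one-line consequence of commutativity. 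If you were writing this up in full you would need to supply that argument carefully, and also handle the non-reduced root case (where $U_{(a)}$ is a two-step filtered group rather than a vector group) with more than a parenthetical remark.
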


\bigskip

Now we shall describe the local version of this theorem. First of all we need to give a definition of the strong $k$-topology on the set of $k$-rational points of a variety over a Hausdorff topological field $k$. This is discussed in the algebraically closed case in Chapter I $\S10$ of \cite{Mumford99}, where the main properties are left as an exercise for the reader. It is also discussed in the case $k=\mathbb{C}$ in Appendix III of \cite{Weil46}.

\begin{Definition} \label{strongtopology} Suppose that $V$ is an affine variety over a Hausdorff topological field $k$, and suppose that we are given a $k$-embedding of $V$ in an affine space $\mathbb{A}^{n}(\overline{k})$, where $\overline{k}$ is an algebraic closure of $k$, where the affine space is given the obvious $k$-structure. Then the topology on $k$ induces a topology on the set of $k$-rational points $\mathbb{A}^{n}(k)$ of the affine space $\mathbb{A}^{n}(\overline{k})$, and thereby induces a topology on $V(k)$, the set of $k$-rational points of the variety $V$. This topology on $V(k)$ does not depend on the choice of embedding in an ambient affine space, and is called the strong $k$-topology on $V(k)$. The definition can be generalised to non-affine varieties in the obvious way. \end{Definition}

Suppose that $G, G'$ and $G^{+}$ are as in the statement of the Borel-Tits theorem. We can give $G^{+}$ a topology arising from the strong $k$-topology on $G(k)$. We need to define a base for this topology. Note that this definition is similar to Definition \ref{basicopen} except in the context of groups rather than buildings.

\begin{Definition} Let $S$ be a maximal $k$-split torus in $G$. Let $\Phi_{k}(G,S)$ be the set of $k$-roots of $G$ with respect to $S$ and let $\Psi=\{\alpha\in\Phi_{k}(G,S)\mid\frac{1}{2}\alpha\notin\Phi_{k}(G,S)\}$. For each $\alpha\in\Psi$ let $U_{\alpha}$ be the subgroup of $G^{+}$ consisting of all $k$-rational points of the unipotent radical of the $k$-parabolic subgroup of $G$ containing $S$ corresponding to $\alpha$, and let $N_{\alpha}$ be an open neighbourhood of the identity in $U_{\alpha}$ in the strong $k$-topology on $U_{\alpha}$. As observed in \cite{Tits65b}, given a system of simple roots $\Pi\subseteq\Psi$ there is a natural one-to-one correspondence between conjugacy classes of $k$-parabolic subgroups and subsets of $\Pi$. Select a system of simple roots $\Pi$ and identify it with the set of generators $S$ for the Coxeter system $(W,S)$ of the building of $G$ over $k$. Pick a longest word $w_{0}\in W$ and let $s_{1}s_{2}\ldots s_{n}$ be a reduced decomposition. For each $i$ such that $1\leq i\leq n$, let $\alpha_{i}$ be the element of $\Pi$ corresponding to $s_{i}$. We say that $N_{-\alpha_{1}}N_{-\alpha_{2}}\ldots N_{-\alpha_{n}}N_{\alpha_{1}}N_{\alpha_{2}}\ldots N_{\alpha_{n}}$ and any right coset thereof is a basic open subset of $G^{+}$. We also say that the empty set is a basic open subset with respect to the torus $S$. \end{Definition}

We will prove in Section 2 that every basic open subset is indeed open and that a base for the strong $k$-topology on $G^{+}$ is defined in this way.

\begin{theorem} [The local Borel-Tits theorem] \label{localBorelTits} Suppose that $G, G',$ and $G^{+}$ are as in the statement of the Borel-Tits theorem and that $k$ is a non-discrete Hausdorff topological field, and give $G(k)$ the strong $k$-topology. Then a local abstract group homomorphism $\alpha:U\subseteq G^{+}\rightarrow G'(k')$, where $U$ is an nonempty basic open neighbourhood of the identity in $G^{+}$, whose range is Zariski dense in $G'$, extends to a global group homomorphism. \end{theorem}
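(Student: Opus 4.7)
The approach is to reduce the local Borel--Tits theorem to Theorem \ref{maintheorem}: I will convert the local group homomorphism $\alpha$ into a partially defined chamber map on $\Delta := \Delta(G,k)$, apply the Main Theorem to globalize it, and then recover a global group homomorphism from the resulting chamber map by Moufang reconstruction combined with the classical Borel--Tits theorem. Let $C_0 \in \Delta$ be the chamber stabilized by the Borel subgroup containing $S$ and corresponding to $\Pi$, let $C_\infty$ be its opposite in the apartment of $S$, and choose $C_0' \in \Delta' := \Delta(G',k')$ analogously. Since the factors $N_{\alpha_1}\cdots N_{\alpha_n}$ lie in the stabilizer of $C_0$, the orbit $V := U\cdot C_0$ equals $N_{-\alpha_1}\cdots N_{-\alpha_n}\cdot C_0$, which is precisely a basic open set of chambers in the sense of Definition \ref{basicopen} with respect to $(C_0, C_\infty)$. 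By standard Bruhat-type uniqueness, each $D \in V$ has a unique representation $D = u \cdot C_0$ with $u \in N_{-\alpha_1}\cdots N_{-\alpha_n} \subseteq U$, so the formula $\phi(D) := \alpha(u) \cdot C_0'$ yields a well-defined map $\phi : V \to \mathrm{Cham}\,\Delta'$.

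The key step is to verify that $\phi$ extends to an injective chamber map on $\Delta(V)$. Adjacency preservation is the subtle point: if $D, D' \in V$ are $s$-adjacent, the corresponding negative-part elements $u, u'$ differ by an element $v \in U_{-\alpha_s}$, and the local homomorphism identity (valid after shrinking $U$ so the relevant products stay in $U$) gives $\alpha(u)^{-1}\alpha(u') = \alpha(v)$; this $\alpha(v)$ must then lie in the stabilizer of the $s$-panel through $C_0'$, a fact I will extract from Moufang rigidity together with the Zariski-density assumption by analyzing the local homomorphism on each simple root group. Once adjacency is preserved, $\phi$ extends simplicially to $\Delta(V)$; injectivity follows from the same Zariski-density consideration, which prevents a non-trivial root group of $G$ from collapsing under $\alpha$ into the pointwise stabilizer of a chamber of $\Delta'$. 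The set $V$, being a basic open set, is trivially quasi-connected, so Theorem \ref{maintheorem} delivers a unique injective chamber map $\tilde\phi : \Delta \to \Delta'$ extending $\phi$.

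Finally, the global chamber map $\tilde\phi$ respects the Moufang root-group structures of both buildings, and the standard reconstruction of group homomorphisms from morphisms of Moufang buildings, together with the Borel--Tits theorem (Theorem \ref{BorelTits}), yields a global abstract group homomorphism $\tilde\alpha : G^+ \to G'(k')$ inducing $\tilde\phi$. To see that $\tilde\alpha$ extends $\alpha$, note that for every $g \in U$ one has $\tilde\alpha(g) \cdot C_0' = \tilde\phi(g \cdot C_0) = \alpha(g) \cdot C_0'$; applying the same identity to the other chambers of $V$ shows that $\alpha(g)^{-1}\tilde\alpha(g)$ fixes a non-empty open set of chambers of $\Delta'$, and adjointness of $G'$ then forces $\tilde\alpha(g) = \alpha(g)$. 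The main obstacle is the middle step: $\alpha$ is only an abstract local group homomorphism with no continuity or algebraicity, so preservation of adjacency and non-collapsing must be derived purely from the local group law plus Zariski-density; once $\phi$ is in place, the remainder of the argument, namely the application of Theorem \ref{maintheorem} and the Moufang-to-group reconstruction, is essentially formal.
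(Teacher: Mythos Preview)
Your route is genuinely different from the paper's, and the difference is worth spelling out. The paper does \emph{not} pass through Theorem~\ref{maintheorem}. Recall that the proof of Theorem~\ref{maintheorem} already contains, on the group side, a self-contained argument that a local homomorphism $\psi:W\subseteq G^{+}\to G'^{+}$ which sends root groups to root groups extends globally, via the rank-one analysis on each $\langle U_{(\alpha)},U_{(-\alpha)}\rangle$ (torus action, local field homomorphism, unique extension). The paper's proof of Theorem~\ref{localBorelTits} simply says: apply that same rank-one extension argument directly to $\alpha$, and for the one missing ingredient---that $\alpha$ carries (neighbourhoods of the identity in) root groups into root groups---observe that the relevant portion of the Borel--Tits proof in \cite{Tits73} is local in nature and goes through unchanged. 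No chamber map is ever built from $\alpha$.

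Your plan instead manufactures a chamber map $\phi$ on a basic open set and invokes Theorem~\ref{maintheorem}. This is a legitimate strategy in principle, but it does not sidestep the hard step, it relocates it. To show that $\phi$ preserves $s$-adjacency you need precisely that $\alpha$ sends a neighbourhood of the identity in $U_{-\alpha_{s}}$ into the stabiliser of the $s$-panel through $C_{0}'$, i.e.\ root groups to root groups. You flag this as the ``main obstacle'' and propose to extract it from ``Moufang rigidity together with the Zariski-density assumption'', but Moufang rigidity is a statement about automorphisms of $\Delta'$, not about abstract elements $\alpha(v)\in G'(k')$; without already knowing that $\alpha(v)$ acts as a root element on $\Delta'$ there is nothing for rigidity to bite on. The actual argument that Zariski-density forces root groups to root groups is the substantive part of \cite{Tits73}, and that is exactly what the paper invokes. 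The same remark applies to your injectivity claim. So your proposal is not wrong in outline, but the step you leave open is the entire content of the theorem, and once that step is supplied (from \cite{Tits73}) the detour through Theorem~\ref{maintheorem} becomes redundant: the rank-one extension machinery inside its proof already finishes the job on the group side.
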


\bigskip

A result related to a special case of this is already known for simply connected almost simple complex Lie groups. There is a classical theorem which says that, given a simply connected Hausdorff topological group $G$ and an open symmetric neighbourhood $U$ of the identity, any local homomorphism from this neighbourhood into an abstract group $H$ extends to a global group homomorphism (see Corollary A2.26 of \cite{Hofmann98}). A special case of the local Borel-Tits result follows from this.

\bigskip

In Section 2 we review the basic definitions of concepts from the theory of buildings and topological spherical buildings, and prove that the building of a semisimple algebraic group $G$ over a Hausdorff topological field $k$ is always a topological spherical building. We also present once again the definition of the notion of a ``quasi-connected" set, and the notion of a basic open subset in the context of both buildings and groups, and prove some of the basic properties of these notions. In Section 3 we present the proof of the local-to-global result for buildings and the local version of the Borel-Tits theorem. In Section 4 we examine a corollary dealing with nilpotent Lie groups which is a version of a result due to Yamaguchi not requiring the hypothesis of continuity.

\bigskip

This paper was written partly while I was employed as Research Intensive Academic at the Australian Catholic University,  partly while I was an Adjunct Lecturer at the University of New South Wales, and partly while I held a post-doctoral position at the University of M\"unster. I am grateful to these institutions for their
support. I would like to thank Bill Franzsen and Norman Wildberger for proofreading preliminary drafts, and Michael Cowling for giving helpful suggestions with the
Introduction. I am particularly grateful to Linus Kramer for taking an interest in my work and providing me with much helpful guidance about the theory of topological
buildings.

\section{Basic Definitions}

In this section we review the definitions of the basic concepts of the theory of buildings. The main references are \cite{Tits74} and \cite{Abramenko08}. For convenience we also review the definition of topological spherical building given in \cite{Kramer02}, as we did in Section 1. We will give a proof that the building of a semisimple algebraic group $G$ over a Hausdorff topological field $k$ is always a topological spherical building. We also repeat the definition of the term ``quasi-connected" and prove some of its basic properties.

\begin{Definition} A complex is a partially ordered set $(P,\leq )$ such that \newline
 \begin{enumerate}
 \item for all $v\in P$, the set $\{w\in P\mid w\leq v\}$ is order isomorphic to $(\mathfrak{P}(S),\subseteq )$ for some set $S$, $\mathfrak{P}(S)$ being the powerset of $S$. In this paper the set $S$ will always be finite. Such a partially ordered set is called a simplex.
 \item if $A, B \in P$ then $A$ and $B$ have a greatest lower bound denoted by $A\cap B$.
\end{enumerate}

\bigskip 

 If $A\leq B$ we say that $A$ is contained in $B$ or is a face of $B$.  \end{Definition}

A complex has just one minimal element called $0$. The elements which are minimal nonzero elements are called vertices and the number of vertices contained in a given element of a complex is called its rank. The rank of the complex is the supremum of the ranks of all the elements. As remarked earlier, in this paper we will only be considering complexes of finite rank. Given an element $A$ of a complex $(\Delta,\leq)$, the set $\{B\in\Delta\mid A\leq B\}$, with the order relation induced from $\Delta$, is called St $A$ or the star of $A$. It is also a complex. Given $B\in \mathrm{St}$ $A$, the rank of $B$ in St $A$ is called the codimension of $A$ in $B$. The greatest lower bound of two elements $C, D$ in a complex (which always exists) is denoted $C\cap D$, and the least upper bound (when it exists) is denoted $C\cup D$. See Appendix A of \cite{Abramenko08} for further discussion of the notion of a simplicial complex.

\begin{Definition} We say that a complex $\Delta$ is a chamber complex if every element is contained in a maximal element and if, given two maximal elements $C, C'$, there exists a finite sequence $C=C_{0}, C_{1}, \ldots C_{m}=C'$, called a gallery of length $m$, such that for all integers $i$ such that $1\leq i \leq m$, the codimension of $C_{i-1}\cap C_{i}$ in either $C_{i-1}$ or $C_{i}$ is at most one. The maximal elements are called chambers. We write $\Cham \Delta$ for the set of chambers of $\Delta$. \end{Definition}

An element of a chamber complex has the same codimension in any chamber which contains it; this quantity is called the codimension of the element of the complex. A chamber complex is called thick (respectively, thin) if every element of codimension one is contained in at least three (respectively, exactly two) chambers. The diameter of a chamber complex is the supremum of the lengths of all the minimal galleries connecting two chambers of the chamber complex.

\bigskip

A flag complex is a complex in which any family of elements any two of which has an upper bound has an upper bound.

\begin{Definition} A morphism of chamber complexes $\Delta,$ $\Delta'$, also called a chamber map, is a mapping $\phi:\Delta\rightarrow \Delta'$ such that the restriction of $\phi$ to the simplex of all faces of any given element $A\in\Delta$ is an isomorphism from the ordered set of all faces of $A$ to the ordered set of all faces of $\phi(A)$, and $\phi$ maps chambers onto chambers. A subcomplex of a chamber complex is a chamber subcomplex if the inclusion mapping is a morphism of chamber complexes. \end{Definition}

\bigskip

An endomorphism of a thin chamber complex $\phi:\Delta\rightarrow\Delta$ is said to be a folding if it is idempotent and every chamber in $\phi(\Delta)$ is the image of exactly two chambers by $\phi$. The range of a folding is called a root. A complex $\Sigma$ is said to be a Coxeter complex if it is a thin chamber complex, and if for every pair $(C,C')$ of adjacent chambers, there exists a folding of $\Sigma$ which maps $C'$ onto $C$. There is a natural one-to-one correspondence between Coxeter complexes and Coxeter groups, whose definition we discuss below.

\bigskip

Suppose that $I$ is a finite set and that $(m_{ij})$ is a matrix indexed by $I\times I$ with entries in $\mathbb{N}\cup\{\infty\}$, such that $m_{ij}\geq 2$ if $i\neq j$, and $m_{ii}=1$ for all $i$. The corresponding Coxeter system is $(W,S)$, where $W$ is a group with generating set $S=\{s_{i}\mid i\in I\}$ and relations $(s_{i}s_{j})^{m_{ij}}=1$ for all $i, j$. For $J\subseteq I$ let $W_{J}$ be the subgroup of $W$ generated by the elements $s_{j}$ for $j\in J$. The mapping $J\mapsto W_{J}$ is an isomorphism of posetes. The left cosets of the groups $W_{J}$ for all $J\subseteq I$ with the reverse of inclusion as the ordering form the Coxeter complex $\Sigma(W,S)$ corresponding to the Coxeter system $(W,S)$. All Coxeter complexes up to isomorphism arise in this way (\cite{Abramenko08}, Chapter 3).

\bigskip

\begin{Definition} If $\Delta$ is a chamber complex and $\mathfrak{A}$ is a family of chamber subcomplexes of $\Delta$ called apartments then we say that $(\Delta,\mathfrak{A})$ is a building if \newline

\begin{enumerate}
\item Any simplex of codimension one meets exactly three chambers (that is, the building is thick); \newline
\item The elements of $\mathfrak{A}$ are Coxeter complexes; \newline
\item Any two elements of $\Delta$ belong to an apartment; \newline
\item If two apartments $\Sigma$ and $\Sigma'$ contain two elements $A, A' \in \Delta$, there exists an isomorphism of $\Sigma$ onto $\Sigma'$ which leaves invariant $A, A'$ and all their faces.
\end{enumerate}

\end{Definition}

It is clear that all the apartments of a building are isomorphic. The isomorphism class of the apartments is called the Coxeter complex of the building. The rank of the Coxeter complex is also called the rank of the building. If the Coxeter complex is isomorphic to the Coxeter system $(W,S)$, then we say that the building has Coxeter system $(W,S)$ or that it is a building of type $(W,S)$. Given that we are working with simplicial complexes of finite rank, it can be shown that a building is always a flag complex (\cite{Abramenko08}, Exercise 4.50). If the Coxeter complex is finite, then the geometric realisation of the Coxeter complex is homeomorphic to a sphere, (\cite{Abramenko08}, Propostion 1.108), and the building is called spherical. It should be noted that the star of any element of a building is again a building. (\cite{Abramenko08}, Proposition 4.9).

\bigskip

If a complex $\Delta$ admits a set of apartments $\mathfrak{A}$ which makes it into a building then the union of all such sets also makes the complex $\Delta$ into a building; hence the complex $\Delta$ has at most one ``maximal building structure" (\cite{Abramenko08}, Section 4.5). If the diameter of $\Delta$ is finite, then the set of apartments $\mathfrak{A}$ is unique when it exists. For these reasons we sometimes by abuse of notation speak of ``the building $\Delta$".

\begin{Definition} Suppose that $G$ is a semisimple algebraic group defined over a field $k$. Then $\Delta(G,k)$ is defined to be the set of all $k$-parabolic subgroups of $G$ with the reverse of inclusion as the order relation. It can be shown that this is a simplicial complex. For each maximal $k$-split torus of $G$ we define the apartment corresponding to this torus to be the set of all $k$-parabolic subgroups containing this torus. The complex $\Delta(G,k)$ with this collection of apartments is a spherical building \cite{Tits74}. This is proved in detail in \S\S20-21 of \cite{Borel91}. \end{Definition}

Given a building $\Delta$ and a chamber $C\in\Delta$ there exists a unique retraction $\lambda_{C}$ of $\Delta$ onto the simplex of all faces of $C$. Two elements of the building are said to be of the same type if their image by $\lambda_{C}$ is the same; this does not depend on the choice of $C$. If we take the quotient of the building by the equivalence relation ``$A$ and $A'$ have the same type" then we obtain the typical simplex of the building typ $\Delta$, and there is a canonical mapping $\mathrm{typ}:\Delta\rightarrow\mathrm{typ}$ $\Delta$. If the building has Coxeter system $(W,S)$ then there is a canonical bijection typ $\Delta\rightarrow\mathfrak{P}(S)$.

\bigskip

Given a Coxeter complex $\Sigma=\Sigma(W,S)$, where $S$ is indexed by $I$, one may define a $W$-invariant double-coset-valued distance function on $\Sigma\times\Sigma$ as follows:

\bigskip

(1) \begin{center}  $\delta:\Sigma\times\Sigma\rightarrow\bigcup\{W_{J}\setminus W/W_{K}, J, K \subseteq I\}$,
$\delta(uW_{J}, vW_{K})=W_{J}u^{-1}vW_{K}$. \end{center}

\bigskip

It can be shown that if $\Delta$ is a building with Coxeter system $(W,S)$, and $\Sigma$ is an apartment of $\Delta$, then the above function $\delta$ on $\Sigma\times\Sigma$ does not depend on the choice of isomorphism of $\Sigma$ with $\Sigma(W,S)$. It can also be shown that if $\Delta$ is a building then there exists a well-defined double-coset-valued distance function

\bigskip

(2) \begin{center} $\delta:\Delta\times\Delta\rightarrow\bigcup\{W_{J}\setminus W/W_{K}, J, K \subseteq I\}$ \end{center}

\bigskip

whose restriction to any apartment of the building is the function $\delta$ given above (\cite{Abramenko08}, Section 4.8).

\bigskip

Next, for convenience, we repeat the definition of topological spherical building taken from \cite{Kramer02}.

\bigskip

Suppose that $C, D$ are chambers in a building $\Delta$ with Coxeter system $(W,S)$, and that $\delta(C,D)=w$, and that $w=s_{i_{1}}s_{i_{2}}\ldots s_{i_{r}}$ is a reduced expression for $w$ in terms of the generating set $S$. Then there exists a unique minimal gallery $C=C_{0}, C_{1}, \ldots C_{r}=D$ such that $\delta(C_{k-1},C_{k})=s_{i_{k}}$ holds for $k=1, 2, \ldots r$. Suppose that $X$ is an element of the bulding $\Delta$ and $C$ is a chamber. Then there exists a unique chamber $E\in$ $\mathrm{St}$ $X$ denoted by $\mathrm{proj}_{X}$ $C$, such that for every chamber $D\in $ $\mathrm{St}$ $X$, and for every minimal gallery $\gamma$ from $C$ to $D$, the first element of $\gamma$ contained in St $X$ is $E$. If $Y\in\Delta$ is arbitrary, there exists a unique $Z$ which is contained in some chamber in St $X$, such that Cham St $Z$=$\mathrm{proj}_{X}$ Cham St $Y$, and we denote this element $Z$ by $\mathrm{proj}_{X} Y$.

\bigskip

\begin{Definition} Suppose that $\Delta$ is a spherical building of type $(W,S)$, considered as a simplicial complex. Suppose that for each $s\in S$ we are given a Hausdorff topology on the set of vertices of type $s$. We thereby obtain, for each $J\subseteq S$, a Hausdorff topology on the simplices of type $J$, by viewing the simplices of type $J$ as $J$-tuples of vertices and taking the subspace topology arising from the product topology. Given any two subsets $J, K \subseteq S$, consider the set $\mathfrak{D}^{J,K}:=\{(X,Y)\mid X$ has type $J,$ $Y$ has type $K,$ there exist opposite chambers containing $X$ and $Y\}$ and the mapping $\mathfrak{D}^{J,K}\rightarrow$ Cham $\Delta, (X,Y)\mapsto\mathrm{proj}_{X} Y$. Suppose that all of these mappings are continuous. Then the building is sad to be a topological spherical building. \end{Definition}

If $k$ is a Hausdorff topological field and $G$ is an semisimple algebraic group defined over $k$, then the building $\Delta(G,k)$ is a spherical building. Let us denote its Coxeter system by $(W,S)$. For each subset $J\subseteq S$, the simplicies of type $J$ form a projective variety over $k$. We can give this variety the strong $k$-topology as defined in Definition \ref{strongtopology}. It is easy to see that this is compatible with the way of deriving the topology on the set of simplicies of a given type of rank greater than one from the topologies on the sets of vertices of each type. We must now show that this makes $\Delta(G,k)$ into a topological spherical building. What we must prove is that the projection maps are continuous. It will be sufficient to show that they are $k$-morphisms of quasiprojective $k$-varieties.

\begin{Definition} Suppose that a group $G$ acts on a building $\Delta$ of type $(W,S)$ in a way that preserves the Weyl distances of chambers and suppose further that $G$ acts transitively on $\{(C,D)\in$ Cham $\Delta\times$ Cham $\Delta\mid\delta(C,D)=w\}$ for each fixed $w\in W$. Then the action of $G$ is said to be Weyl transitive. \end{Definition}

\begin{lemma} Suppose that $G$ is a semisimple algebraic group defined over a Hausdorff topological field $k$. Then the building $\Delta:=\Delta(G,k)$ is a topological spherical building. \end{lemma}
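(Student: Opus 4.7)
The plan is to follow the cue in the excerpt: prove that for each pair $J, K \subseteq S$ the projection map $\mathfrak{D}^{J,K} \to \mathrm{Cham}\,\Delta$ is the restriction to $k$-rational points of a morphism of $k$-varieties, so that continuity in the strong $k$-topology falls out from the general fact that $k$-morphisms induce continuous maps on $k$-points. Thus the problem becomes algebro-geometric rather than topological.

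First I would identify the simplicial side with partial flag varieties. Fix a maximal $k$-split torus $T \subseteq G$ and a $k$-Borel subgroup $B_0 \supseteq T$. For $J \subseteq S$ let $P_J$ be the standard $k$-parabolic of type $J$; then simplices of type $J$ are in canonical $G(k)$-equivariant bijection with $(G/P_J)(k)$, a projective $k$-variety. The product topology on simplices as $J$-tuples of vertices agrees with the strong $k$-topology pulled back through the natural closed $k$-immersion $G/P_J \hookrightarrow \prod_{s \in J} G/P_{\{s\}}$. Next, by the relative Bruhat decomposition, the diagonal $G$-orbits on $G/P_J \times G/P_K$ are indexed by $W_J \backslash W / W_K$ and are locally closed; one checks that the pair $(X,Y)$ lies in opposite chambers iff $(X,Y)$ lies in the unique open orbit $\mathcal{O}$, the one corresponding to the longest double coset representative (equivalently, iff the associated parabolics admit opposite Borel subgroups). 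Since $\mathcal{O}$ is defined over $k$ and open, $\mathfrak{D}^{J,K} = \mathcal{O}(k)$ and we have the desired open $k$-subvariety.

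To realize the projection as a $G$-equivariant $k$-morphism, take as base point the pair $(X_0, Y_0) = (P_J, n_{w_0} P_K n_{w_0}^{-1})$, where $n_{w_0} \in N_G(T)(k)$ lifts the longest element; this pair is $k$-rational and lies in $\mathcal{O}$. Its stabilizer is $H := P_J \cap n_{w_0} P_K n_{w_0}^{-1}$, a $k$-subgroup containing $T$. Let $B'_0 := \mathrm{proj}_{X_0} Y_0$, regarded as a $k$-Borel. The $G$-equivariance of the building's projection operation (which is purely building-theoretic and hence automatic) forces $H$ to fix $B'_0$, giving the inclusion $H \subseteq B'_0$. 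This inclusion of $k$-subgroups of $G$ yields a $G$-equivariant morphism of $k$-varieties $G/H \to G/B'_0 = G/B_0$; under the identifications of Step 1 and Step 2 this is precisely the projection. Passing to $k$-points gives the continuity claim.

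I expect the main obstacle to be the stabilizer-containment $H \subseteq B'_0$ in Step 3, i.e.\ identifying $\mathrm{proj}_{X_0} Y_0$ concretely in root-datum terms and verifying that $P_J \cap n_{w_0} P_K n_{w_0}^{-1}$ lies inside this Borel. The cleanest way is to compute $\mathrm{proj}_{X_0} Y_0$ as the chamber of $\mathrm{St}(X_0)$ whose associated Borel is generated by $T$ together with the root subgroups $U_\alpha$ selected by the Bruhat geometry of the pair $(J,K)$ relative to $w_0$, and then to read off from the decomposition of $P_J$ and of the opposite parabolic $n_{w_0} P_K n_{w_0}^{-1}$ that their intersection only contributes roots lying in this Borel. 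Once this book-keeping is done the remaining steps — open orbit, equivariance, and passage to $k$-points — are essentially formal, and the topological spherical building property follows for every pair $J,K$ simultaneously.
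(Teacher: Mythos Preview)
Your argument is essentially the paper's: both identify the domain of the projection map as the $k$-points of a homogeneous $k$-variety $G/H$, note that $G$-equivariance of the building-theoretic projection forces $H$ into the stabilizer $H'$ of the image point, and conclude that the map is the quotient $k$-morphism $G/H\to G/H'$. The only organizational difference is that the paper first reduces to pairs of \emph{chambers} $(C,D)$ at a fixed Weyl distance $w$ and the one-step projection of $D$ onto a panel of $C$, so that $H=G_C\cap G_D$ is an intersection of two Borels and $H'=G_E$ is a Borel, whereas you handle general $(J,K)$ directly with $H=P_J\cap n_{w_0}P_K n_{w_0}^{-1}$.

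One correction worth making: $\mathrm{proj}_{X_0}Y_0$ is not in general a chamber, so $H'$ need not be a Borel. For example, in type $B_2$ with $J=K=\{s_1\}$ one has $\mathrm{proj}_{X_0}Y_0=X_0$, and $H=P_J\cap n_{w_0}P_J n_{w_0}^{-1}$ contains the full $A_1$-Levi generated by $U_{\pm\alpha_1}$, hence lies in no Borel. This does not damage your argument at all --- the equivariance reasoning you already gave in Step~3 yields $H\subseteq H'$ with $H'$ the parabolic stabilizing $\mathrm{proj}_{X_0}Y_0$, and that is exactly what you need --- but it does mean the explicit root-by-root verification you flag as the ``main obstacle'' would fail as written and is in any case unnecessary. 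The paper likewise just asserts $G_C\cap G_D\subseteq G_E$ on equivariance grounds, with no root book-keeping.
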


\begin{proof} Suppose that we are given a fixed $w\in W$ with reduced decomposition $s_{1}s_{2}\ldots s_{n}$. Given a pair of chambers $(C,D)$ such that $\delta(C,D)=w$, we may consider the projection of $D$ onto the panel of $C$ with type $s_{1}$, which we denote by $E$. It is sufficient to prove that the mapping defined on $\{(C,D)\mid\delta(C,D)=w\}$ sending each pair $(C,D)$ to the chamber $E$ defined in this way is a $k$-morphism. Given any chamber $C$, let us denote by $G_{C}$ the subgroup of $G$ which fixes $C$. Now, given a pair $(C,D)$ such that $\delta(C,D)=w$, the set $\{(C,D)\mid\delta(C,D)=w\}$ is the set of $k$-rational points of a quasiprojective variety which may be identified with $(G/(G_{C}\cap G_{D}))_{k}$ (see \cite{Humphreys75}, Section 34.2), because the group $G$ acts Weyl transitively on $\Delta$ (see for example \cite{Abramenko08}, Chapter 6). We have $G_{C}\cap G_{D}\subseteq G_{E}$ and the mapping we are considering may be identified with the projection morphism $G/(G_{C}\cap G_{D})\rightarrow G/G_{E}$. Since the groups $G_{C}, G_{D}$ and $G_{E}$ are $k$-subgroups this is clearly a $k$-morphism. \end{proof}

\bigskip

In Chapter 5 of \cite{Tits74}, Jacques Tits classifies the isomorphisms $\Delta(G,k)\rightarrow\Delta(G',k')$, where $k$ and $k'$ are infinite fields and $G$ (resp. $G'$) is a semisimple algebraic group defined over $k$ (resp. $k'$), such that the buildings $\Delta(G,k), \Delta(G',k')$ have Coxeter diagrams with no isolated nodes. Later we shall show how to modify his argument slightly so as to obtain a classification of the injective chamber maps between two of these buildings of the same rank; the only change necessary is that one allows field homomorphisms which are not necessarily surjective rather than field isomorphisms.

\begin{Definition} Suppose that $\alpha$ is a root of an apartment of the building $\Delta$. By the root group $U_{\alpha}$ we mean the group of all type-preserving automorphisms of $\Delta$ which fix every chamber in the root $\alpha$ and also every chamber adjacent to a chamber in $\alpha$ via a panel which is not in the boundary of $\alpha$. \end{Definition}

\begin{Definition} Suppose that a thick spherical building $\Delta$ has the property that for each root $\alpha$ in some apartment of $\Delta$, the root group $U_{\alpha}$ acts simply transitively on the set of apartments containing $\alpha$. Then the building is said to be strictly Moufang (see \cite{Abramenko08}, Chapter 7). \end{Definition}
 
We now review the notion of ``quasi-connected" from Section 1. First we must define a certain base for the topology on the set of chambers of the building of a semisimple algebraic group over a Hausdorff topological field.

\bigskip

Recall that the definition of basic open set can be found in Definition \ref{basicopen}.

\bigskip

We must now show that every basic open set is indeed an open set and that in this way a base for the topology on the set of chambers is defined.

\begin{lemma} \label{basis} A basic open set is an open set, and the basic open sets form a base for the topology on the set of chambers. \end{lemma}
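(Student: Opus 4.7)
The plan is to analyse the product map
\[ \mu \colon U_{s_1} \times U_{s_2} \times \cdots \times U_{s_n} \longrightarrow \mathrm{Cham}\,\Delta, \qquad (u_1, \ldots, u_n) \mapsto u_1 u_2 \cdots u_n(C), \]
where the factors come from the fixed reduced decomposition $w_0 = s_1 s_2 \cdots s_n$, and to show that $\mu$ is a homeomorphism onto the subset $\mathrm{Opp}(C') := \{D \in \mathrm{Cham}\,\Delta \mid \delta(D,C')=w_0\}$ of chambers opposite $C'$, which is itself open in $\mathrm{Cham}\,\Delta$. Granted this, a basic open set $U = N_{s_1} N_{s_2} \cdots N_{s_n}(C) = \mu(N_{s_1}\times\cdots\times N_{s_n})$ is the $\mu$-image of a product of open neighbourhoods of the identity, hence open in $\mathrm{Opp}(C')$ and therefore open in $\mathrm{Cham}\,\Delta$.

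I would verify the four ingredients in turn. The openness of $\mathrm{Opp}(C')$ follows from the definition of topological spherical building: the continuity of the projection maps $\mathfrak{D}^{J,K}\to\mathrm{Cham}\,\Delta$ forces the set of chambers at maximal Weyl distance from a given chamber to be open (compare \cite{Kramer02}). The bijectivity of $\mu$ onto $\mathrm{Opp}(C')$ is the unique-factorisation property of the unipotent radical of the stabiliser of $C'$ in a strictly Moufang spherical building (Chapter 7 of \cite{Abramenko08}): any chamber opposite $C'$ is joined to $C$ by a unique minimal gallery of type $s_1 s_2 \cdots s_n$, and walking along this gallery panel by panel corresponds to a unique choice of $u_i \in U_{s_i}$. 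Continuity of $\mu$ follows because the action of each $U_{s_i}$ on $\mathrm{Cham}\,\Delta$ is continuous: by definition of the topology on $U_{s_i}$ together with the continuity of projection onto panels, composition of these actions is continuous. For the continuity of $\mu^{-1}$ one argues inductively: given $D = \mu(u_1, \ldots, u_n)$, the element $u_1$ is characterised by $u_1(C) = \mathrm{proj}_{P_1}(D)$, where $P_1$ is the panel of $C$ of cotype $s_1$. Recovery of $u_1$ is continuous because the topology on $U_{s_1}$ was defined precisely via the bijection $u\mapsto u(C)$ onto chambers across $P_1$, and projection is continuous by the topological building axiom. Applying $u_1^{-1}$ to $D$ then yields a chamber at Weyl distance $s_2 \cdots s_n$ from $C$, and the argument iterates.

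For the base property, take any open $V\subseteq \mathrm{Cham}\,\Delta$ and any $D\in V$. A general fact for spherical buildings supplies a chamber $D'$ opposite $D$, so $V\cap\mathrm{Opp}(D')$ is an open neighbourhood of $D$. Applying the homeomorphism $\mu$ associated with the pair $(D,D')$ and any reduced decomposition of $w_0$, this set pulls back to an open neighbourhood of $(1, \ldots, 1)$ in $\prod_i U_{s_i}$; the product topology produces open neighbourhoods $N_{s_i}$ of the identity in each $U_{s_i}$ whose product lies inside it, and the $\mu$-image of this product is a basic open set containing $D$ and contained in $V$. The main technical obstacle is establishing the continuity of $\mu^{-1}$: the inductive recovery via projections implicitly uses that the root-group action is continuous on all of $\mathrm{Cham}\,\Delta$, not merely on the chambers across the defining panel, and that applying $u_1^{-1}$ to an arbitrary chamber depends continuously on both arguments. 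These facts must be carefully extracted from the topological building axioms by expressing the full root-group action as an iterated composition of panel projections.
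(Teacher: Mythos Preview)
Your proposal is correct and rests on the same core idea as the paper: the product map from $\prod_i U_{s_i}$ to the set of chambers opposite $C'$ is a homeomorphism, and this is established via the continuity of projections built into the topological building axioms. The paper, however, executes this slightly differently: rather than proving $\mu$ is a global homeomorphism onto $\mathrm{Opp}(C')$, it works pointwise --- for each $D\in U$ it selects an auxiliary apartment $A$ in which a non-stammering gallery runs step-by-step opposite the (possibly stammering) gallery $C\to D$, and then invokes Kramer's panel-coordinatisation (Section 7.2 of \cite{Kramer02}) with respect to that apartment to obtain an open neighbourhood of $D$ inside $U$. This sidesteps the issue you flag at the end (continuity of the full root-group action on $\mathrm{Cham}\,\Delta$) by appealing directly to a result already packaged in the literature, whereas your approach is more self-contained but obliges you to unpack that continuity from the axioms. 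Both routes are valid; yours is more systematic, the paper's is shorter by deferring the technical heart to Kramer.
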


\begin{proof} Let all notations be as in the definition of a basic open set. Suppose that $D\in U$. Then there exist $u_{1}\in N_{s_{1}}, u_{2}\in N_{s_{2}}, \ldots u_{n}\in N_{s_{n}}$ such that $D=u_{1}u_{2} \ldots u_{n}C$. Some of the $u$'s may be equal to the identity. Thus the gallery $(C, u_{1}C, u_{1}u_{2}C, \ldots u_{1}u_{2}\ldots u_{n}C)$ is a gallery joining $C$ to $D$, possibly a stammering gallery. We may find an apartment $A$ containing a non-stammering gallery $(E, v_{1}C, v_{1}v_{2}E, \ldots v_{1}v_{2}\ldots v_{n}E)$ where $v_{i}\in U_{s_{i}}$ for all $i$ such that $1\leq i\leq n$, and further, if $i$ is an integer such that $0\leq i\leq n$, then $u_{1}u_{2}\ldots u_{i}C$ is opposite $v_{1}v_{2}\ldots v_{i}E$. Then the gallery from $C$ to $D$ may be co-ordinatised by the panels of the apartment $A$ via the procedure described in Section 7.2 of \cite{Kramer02}, and the co-ordinatisation map is a homeomorhpism. This shows that there is an open neighbourhood of $D$ contained in $U$, and the second part of the lemma is also now clear. \end{proof}

Recall the definition of ``quasi-connected" given in Definition \ref{quasiconnected}.

\bigskip

Next, we review the definition of a basic open subset of $G^{+}$, where $G^{+}$ is as in the statement of the Borel-Tits theorem.

\begin{Definition} Let $S$ be a maximal $k$-split torus in $G$. Let $\Phi_{k}(G,S)$ be the set of $k$-roots of $G$ with respect to $S$ and let $\Psi=\{\alpha\in\Phi_{k}(G,S)\mid\frac{1}{2}\alpha\notin\Phi_{k}(G,S)\}$. For each $\alpha\in\Psi$ let $U_{\alpha}$ be the subgroup of $G^{+}$ consisting of all $k$-rational points of the unipotent radical of the $k$-parabolic subgroup of $G$ containing $S$ corresponding to $\alpha$, and let $N_{\alpha}$ be an open neighbourhood of the identity in $U_{\alpha}$ in the strong $k$-topology on $U_{\alpha}$. As observed in \cite{Tits65b}, given a system of simple roots $\Pi\subseteq\Psi$ there is a natural one-to-one correspondence between conjugacy classes of $k$-parabolic subgroups and subsets of $\Pi$. Select a system of simple roots $\Pi$ and identify it with the set of generators $S$ for the Coxeter system $(W,S)$ of the building of $G$ over $k$. Pick a longest word $w_{0}\in W$ and let $s_{1}s_{2}\ldots s_{n}$ be a reduced decomposition. For each $i$ such that $1\leq i\leq n$, let $\alpha_{i}$ be the element of $\Pi$ corresponding to $s_{i}$. We say that $N_{-\alpha_{1}}N_{-\alpha_{2}}\ldots N_{-\alpha_{n}}N_{\alpha_{1}}N_{\alpha_{2}}\ldots N_{\alpha_{n}}$ and any right coset thereof is a basic open subset of $G^{+}$. We also say that the empty set is a basic open subset with respect to the torus $S$. \end{Definition}

By considering the simply transitive action of $G^{+}$ on pairs of opposite chambers, it follows from Lemma \ref{basis} that this does indeed define a base for the strong $k$-topology on $G^{+}$.

\bigskip

Our goal in this paper is the proof of Theorems \ref{maintheorem} and \ref{localBorelTits}. We shall give these in the next section.

\section{Proof of the Main Theorems}

Suppose that $G$ (resp. $G'$) is a semisimple algebraic group defined over a field $k$ (resp. $k'$). Suppose that $k$ is a non-discrete Hausdorff topological field. Let $\Delta:=\Delta(G,k), \Delta':=\Delta(G',k')$. Suppose that the Coxeter diagrams of $\Delta, \Delta'$ have no isolated nodes. The building $\Delta$ is a topological spherical building, and we wish to prove that, given a nonempty open quasi-connected subset $U$ of Cham $\Delta$ and an injective chamber map $\varphi:\Delta(U)\rightarrow\Delta'$, there is a unique extension of $\varphi$ to an injective chamber map $\Delta\rightarrow\Delta'$. It is sufficient to prove that if $U\subseteq V$ are basic open sets and $\Delta(U)$ is the domain of the injective chamber map $\varphi$, then there is a unique extension of $\varphi$ to an injective chamber map with domain $\Delta(V)$.

\begin{proof} [Proof of Theorem \ref{maintheorem}] Suppose that $U$ is a basic open set with respect to the pair of opposite chambers $(C, C')$. We denote by $G^{+}$ the subgroup of $G(k)$ generated by all the $k$-rational points of unipotent radicals of $k$-parabolic subgroups, and similarly for $G'^{+}$. First we must show how to associate to the mapping $\varphi$ a local group homomorphism $\psi:W\subseteq G^{+}\rightarrow G'^{+}$, where $W$ is a basic open neighbourhood of the identity in $G^{+}$. 

\bigskip

Let $A$ be an apartment containing $C$ which is wholly contained in $U$. This exists because for each decomposition $w_0=s_{1}s_{2}\ldots s_{k}$ of the longest word $w_{0}\in W$ we may let $U'$ be the set of ends of of a open set $U''$ of non-stammering galleries starting at $C$ of type $(s_{1},s_{2},\ldots s_{k})$ containing the constant gallery, with the property that every member of every gallery in $U''$ is contained in $U$. Taking the intersection of all such $U'$ for every such decomposition, we obtain an open set containing $C$ contained in $U$. A chamber opposite $C$ contained in this set determines together with $C$ an apartment wholly contained in $U$. Let $D$ be the chamber of $A$ opposite $C$. Suppose that the building $\Delta$ is of type $(W,S)$. For each $s\in S$ let $V_{s}$ be the root group corresponding to the root $R_{s}$ of the apartment $A$ which does not contain $C$ but which is attached to $C$ along the panel of cotype $\{s\}$. There exists an open neighbourhood of the identity $W_{s}\subseteq V_{s}$, such that if $g\in W_{s}$, then $g$ maps an open neighbourhood $U_{1}\subseteq U$ of $C$ onto some other open set $U_{2}\subseteq U$, and furthermore $U_{1}$ may be chosen to be a basic open set with respect to the pair of opposite chambers $(C,D)$. From this we obtain a mapping $g^{*}:\varphi(U_{1})\rightarrow\varphi(U_{2})$ given by $g^{*}=\varphi\circ g\circ\varphi^{-1}$. Let $V'_{s}$ be the root group in $G'(k')$ corresponding to the root $\varphi(R_{s})$ of the apartment $\varphi(A)$. 

\begin{lemma} If $g\in W_{s}$ and $U_{1}, U_{2}$ are as above, then $g^{*}$ is the restriction to $\varphi(U_{1})$ of an element of $V'_{s}$. \end{lemma}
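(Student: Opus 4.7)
The plan is to use the strictly Moufang property of $\Delta'$ to identify a candidate $h\in V'_s$, and then verify that $h|_{\varphi(U_1)}=g^*$ by combining the fixed-point structure of $g$ with the coordinatization of basic open sets established in the proof of Lemma \ref{basis}. Let $P$ denote the panel of $C$ of cotype $\{s\}$ and let $C_0$ be the unique chamber of $R_s$ sharing $P$ with $C$. Since $g\in V_s=U_{R_s}$ fixes $R_s$ pointwise, $gC_0=C_0$, so $gC$ is $s$-adjacent to $C_0$ through $P$ but distinct from $C_0$. Applying the injective chamber map $\varphi$, both $\varphi(C)$ and $\varphi(gC)=g^*(\varphi(C))$ lie in the star of $\varphi(P)$ and both differ from $\varphi(C_0)\in\varphi(R_s)$. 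Because $\Delta'$ is strictly Moufang, $V'_s$ acts simply transitively on the chambers of this star other than $\varphi(C_0)$, so there is a unique $h\in V'_s$ with $h(\varphi(C))=\varphi(gC)$.

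I would then verify $g^*=h|_{\varphi(U_1)}$. The two maps agree at $\varphi(C)$ by construction, and they agree on $\varphi(R_s\cap U_1)$, since $g$ fixes $R_s$ pointwise while $h$ fixes $\varphi(R_s)$ pointwise by definition of a root group. To reach an arbitrary chamber $E\in U_1$, I would select a reduced decomposition $w_0=s_1s_2\ldots s_n$ with $s_1=s$ and invoke the coordinatization from Lemma \ref{basis}: one has a unique expression $E=u_1u_2\cdots u_nC$ with $u_i\in N_{s_i}\subseteq V_{s_i}$, so that $gE=(gu_1)u_2\cdots u_nC$. The restriction of $\varphi$ to $A$ is an isomorphism onto the apartment $\varphi(A)$, and it carries the root group $V_{s_i}$ attached to $C$ onto $V'_{s_i}$ attached to $\varphi(C)$; this yields a parallel coordinatization of $\varphi(U_1)$ under which $\varphi(E)=v_1v_2\cdots v_n\varphi(C)$ for corresponding $v_i\in V'_{s_i}$. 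Left multiplication by $h\in V'_s=V'_{s_1}$ then produces $(hv_1)v_2\cdots v_n\varphi(C)$, and matching against the defining identity $h(\varphi(C))=\varphi(gC)$ gives $h(\varphi(E))=\varphi(gE)=g^*(\varphi(E))$.

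The principal obstacle is making rigorous the compatibility of the two coordinatizations: one must show that the correspondence $u_i\leftrightarrow v_i$ induced by $\varphi$ is consistent with the root group actions on both sides, and that under this correspondence the specific element $g\in V_s$ is carried precisely to the element $h\in V'_s$ identified in the first step. This rests on the fact that $\varphi$ carries the apartment $A$ isomorphically onto $\varphi(A)$ and preserves Weyl distances, together with the uniqueness built into the strictly Moufang property on $\Delta'$, which pins down the root group at each chamber of $\varphi(A)$ from the chamber structure of that apartment. Once that compatibility is in place, the equality $g^*=h|_{\varphi(U_1)}$ is just a matter of matching coordinates, and the lemma follows.
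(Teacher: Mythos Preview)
Your first step is fine: the candidate $h\in V'_s$ is correctly singled out by the simply transitive action of $V'_s$ on chambers through $\varphi(P)$ other than $\varphi(C_0)$. The problem is in the verification that $h|_{\varphi(U_1)}=g^*$, which as you set it up is circular. You write that $\varphi$ ``carries the root group $V_{s_i}$ attached to $C$ onto $V'_{s_i}$ attached to $\varphi(C)$'', yielding a parallel coordinatization $\varphi(u_1\cdots u_nC)=v_1\cdots v_n\varphi(C)$ with each $v_i$ corresponding to $u_i$. But the assertion that conjugation by $\varphi$ sends (a neighbourhood of the identity in) $V_{s_i}$ into $V'_{s_i}$ is exactly the lemma being proved, with $s_i$ in place of $s$. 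Without that, there is no reason the coordinates $v_2,\ldots,v_n$ computed for $\varphi(E)$ and those computed for $\varphi(gE)$ should coincide, so the intended comparison of $h(\varphi(E))=(hv_1)v_2\cdots v_n\varphi(C)$ with $\varphi(gE)$ does not go through. Your third paragraph names this as ``the principal obstacle'' but then invokes only $\varphi|_A$ and the Moufang property; neither controls how $\varphi$ behaves on the intermediate chambers $u_1\cdots u_iC$, which lie outside $A$.

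The paper avoids this circularity by invoking instead a local form of the rigidity theorem (Theorem~\ref{rigiditytheorem}): a chamber map defined on a basic open set $U_1$ with respect to the opposite pair $(C,D)$ is already determined by its restriction to $(E_1(D)\cap U_1)\cup\{C\}$. One then checks directly that $g^*$ and the restriction of some $h\in V'_s$ agree on the image of this small set under $\varphi$. This is manageable because $g$, being in the root group $V_s$, fixes every chamber of $E_1(D)$ meeting $D$ in a non-boundary panel, and the action at $\varphi(C)$ pins down $h$ uniquely by the strictly Moufang property. Rigidity then forces equality on all of $\varphi(U_1)$, with no need to match coordinatizations or to assume anything about how $\varphi$ interacts with the other root groups.
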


\begin{proof} The rigidity theorem, Theorem \ref{rigiditytheorem}, says that an injective chamber map is determined by its action on $E_{1}(D)\cup \{C\}$ where $(C,D)$ are a pair of opposite chambers. The proof of this theorem given in \cite{Abramenko08}, Section 5.9, can be modified to show that if $U_{1}$ is a basic open set with respect to $(C,D)$, and $D'$ is a chamber opposite $C$ such that the apartment generated by $C$ and $D'$ is wholly contained in $U_{1}$, then $g\mid_{U_{1}}$ is determined by its action on $(E_{1}(D')\cap U_{1})\cup \{C\}$. Thus any chamber map $U_{1}\rightarrow U_{2}$ that agrees with $g$ on $(E_{1}(D')\cap U_{1})\cup \{C\}$ must be equal to $g$. It follows that any chamber map $\varphi(U_{1})\rightarrow \varphi(U_{2})$ that agrees with $g^{*}$ on $(E_{1}(\varphi(D'))\cap \varphi(U_{1})\cup \{\varphi(C)\}$ must be equal to $g^{*}$. Since $\Delta'$ is strictly Moufang there is exactly one element of $V'_{s}$ whose restriction to $\varphi(U_{1})$ has this property, so it follows that $g^{*}$ is the restriction to $\varphi(U_{1})$ of an element of $V'_{s}$.

\end{proof}

We have defined a mapping $W_{s}\rightarrow V'_{s}$. Similarly we can define mappings $W_{-s}\rightarrow V'_{-s}$ for each $s\in S$, where $W_{-s}$ is an open neighbourhood of the identity in $V_{-s}$ and $V_{-s}, V'_{-s}$ respectively are the root groups corresponding to $-R_{s}$ in $G(k), G'(k')$ respectively. These mappings extend to a local group homomorphism $\psi:W\subseteq G^{+}\rightarrow G'^{+}$, where $W$ is a basic open neighbourhood of the identity in $G^{+}$ with respect to the maximal $k$-split torus corresponding to the apartment $A$. This group homomorphism also has the property that root groups are mapped into root groups, for any root, not just the simple ones. In fact more can be said: if we let $\Prefix_{k}{\Phi}$ be the set of $k$-roots of $G$ relative to the maximal $k$-split torus corresponding to the apartment $A$, and we let $U_{\alpha}$ be the set of $k$-rational points of the unipotent $k$-group corresponding to $\alpha$ for each $\alpha\in\Prefix_{k}{\Phi}$, and similarly for $U'_{\beta}$ for a $k'$-root $\beta$ of $G'$ with respect to the maximal $k$-split torus corresponding to the apartment $\varphi(A)$, then there is a bijection $f$ between the relative root systems of each group such that $U_{\alpha}\cap W$ is mapped into $U'_{f(\alpha)}$. This follows from the fact that the ``refined RGD-system" structure is determined by the geometry of the building; see Section 7.9.3 of \cite{Abramenko08}, and also Theorem 7.116.

\bigskip

Supose that $\alpha\in\Prefix_{k}{\Phi}, \frac{1}{2}\alpha\notin\Prefix_{k}{\Phi}$. By $U_{(\alpha)}$ we mean the group generated by $U_{k\alpha}$ for all positive integers $k$ such that $k\alpha\in\Prefix_{k}{\Phi}$. Consider the mapping $\psi: W\cap\langle U_{(\alpha)}, U_{(-\alpha)} \rangle\rightarrow \langle U'_{(f(\alpha))}, U'_{(-f(\alpha))} \rangle$. We must try to show that this extends to a global group homomorphism on $\langle U_{(\alpha)}, U_{(-\alpha)} \rangle$. There exists a $k$-split torus $S$ of dimension one, contained in the maximal $k$-split torus corresponding to the apartment $A$, such that $S\cap \alpha^{-1}(k^{*2})\subseteq \langle U_{(\alpha)}, U_{(-\alpha)} \rangle$.  Consider first the case where $2\alpha\notin\Prefix_{k}{\Phi}$. The action of $S$ on $U_{\alpha}$ and $U_{-\alpha}$ by conjugation makes them into vector spaces over $k$. (The group operations on $U_{\alpha}$ and $U_{-\alpha}$ provide the vector space addition in both cases, and the action of $S$ by conjugation gives the scalar multiplication.) Let $W\cap S\cap\alpha^{-1}(k^{*2})$ be equal to $S\cap\alpha^{-1}(W^{*})$ where $W^{*}$ is an open neighbourhood of 1 in $k^{2}$. By considering the action of $S$ on $U_{\alpha}$ and $U_{\alpha}$ we can show that we can extend $\psi$ from $S\cap\alpha^{-1}(W^{*})$ to $S\cap \alpha^{-1}(R)$ where $R$ is the ring generated in $k$ by $W^{*}$. This ring includes an open neighbourhood of 0. We now see that from $\psi$ we can obtain a local field homomorphism $k\rightarrow k'$. (From the action of $\psi$ on $S$ we obtain a mapping defined on $k^{*}$ in a neighbourhood of 1, but this can be transferred to a neighbourhood of 0.)  But local field homomorphisms extend uniquely to global field homomorphisms. It is now easy to see that there is a unique extension of $\psi$ to a global group homomorphism. The argument is similar if $2\alpha\in\Prefix_{k}{\Phi}$, except that we consider the action of $S$ on $U_{(\alpha)}/U_{2\alpha}, U_{2\alpha}, U_{(-\alpha)}/U_{-2\alpha}, U_{-2\alpha}$. Since the restriction of $\psi$ to $W\cap\langle U_{(\alpha)}, U_{(-\alpha)} \rangle$ extends to a unique global group homomorphism on $\langle U_{(\alpha)}, U_{(-\alpha)} \rangle$ for each $\alpha\in\Prefix_{k}{\Phi}$, it follows that $\psi$ extends to a unqiue global group homomorphism on $G^{+}$. From this extension we can now construct an extension of $\varphi$ to an injective chamber map $\varphi:\Delta\rightarrow\Delta'$, or to a unique injective chamber map $\varphi:\Delta(V)\rightarrow\Delta'$ where $V$ is a basic open set containing $U$.

\end{proof}

\begin{proof} [Proof of Theorem \ref{localBorelTits}] We saw above that it is sufficient to prove that the root groups of the refined RGD system are mapped into root groups. The proof of this given in \cite{Tits73} works in the local case with no difficulties. \end{proof}

\section{A corollary dealing with the rigidity of nilpotent groups}

In this section we look at a corollary to the main result of the foregoing section which deals with fibrations of nilpotent Lie groups. Specifically, we prove the following theorem.

\begin{theorem} Suppose that $N$ is a nilpotent real or complex Lie group with Lie algebra $\mathfrak{n}$. Suppose that the Lie algebra $\mathfrak{n}$ occurs in the Iwasawa decomposition of a semisimple Lie algebra $\mathfrak{s}=\mathfrak{k}\oplus\mathfrak{a}\oplus\mathfrak{n}$ whose corresponding Lie group Iwasawa decomposition is $S=KAN$. Suppose that $\mathfrak{n_{1}}$ and $\mathfrak{n_{2}}$ are two linearly disjoint Lie subalgebras of $\mathfrak{n}$ which are each the direct sum of a set of root spaces in $\mathfrak{n}$ and which generate $\mathfrak{n}$ as a Lie algebra. Suppose that for all $X\in\mathfrak{n_{1}}$ there exists a $Y\in\mathfrak{n_{2}}$ such that $[X,Y]\neq 0$, and for all $X\in\mathfrak{n_{2}}$ there exists a $Y\in\mathfrak{n_{1}}$ such that $[X,Y]\neq 0$. Suppose that $N_{1}$ and $N_{2}$ are the analytic groups corresponding to $\mathfrak{n_{1}}$ and $\mathfrak{n_{2}}$. Suppose that $U$ is a nonempty open connected subset of $N$ and that $\phi:U\rightarrow N$ is a mapping whose range is Zariski dense in $N$ such that, for all $x\in U$, $\phi(xN_{1}\cap U)\subseteq \phi(x)N_{1} \cap U$ and $\phi(xN_{2}\cap U)\subseteq \phi(x)N_{2}\cap U$. Then, if we let $k=\mathbb{R}$ or $\mathbb{C}$ depending on whether we are in the real or complex case, $\phi$ is a mapping induced by the left action of an element of $S\cdot \mathbb{C}^{*}1$ on $N=S/KA$, possibly composed with a mapping induced by a field homomorphism $\psi:k\rightarrow k$. \end{theorem}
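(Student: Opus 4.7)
The plan is to reduce the statement to Theorem \ref{maintheorem} by interpreting $\phi$ as an injective chamber map on a subcomplex of the spherical building $\Delta := \Delta(S,k)$, where $k = \mathbb{R}$ or $\mathbb{C}$, and then to use the classification of chamber maps discussed in Section 2 to identify the resulting global map. The starting point is the Bruhat--Iwasawa realisation of $N$ as the big cell in $\Cham \Delta$: if $C_0 \in \Cham \Delta$ is the chamber stabilised by $P_{\min} = MAN$ and $\bar C_0$ is opposite $C_0$, then $\iota : N \to \Cham \Delta$, $n \mapsto n \bar C_0$, is an open embedding whose image is precisely the set of chambers opposite $C_0$. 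Under $\iota$, the nonempty connected open set $U \subseteq N$ becomes a nonempty open set of chambers which, in the real or complex case, is quasi-connected in the sense of Definition \ref{quasiconnected}.

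The next step is to translate the fibration-preservation hypotheses into building terms. Because each $\mathfrak{n}_i$ is a Lie subalgebra that is a direct sum of restricted root spaces, the associated set $\Sigma_i \subseteq \Phi^+$ is closed under addition within $\Phi^+$, and $N_i$ is generated by the root subgroups $U_\alpha$ for $\alpha \in \Sigma_i$. Under $\iota$, the coset $xN_i$ corresponds to an $N_i$-orbit on the big cell of $\Cham \Delta$; in the model case in which $\Sigma_i$ is exactly the root set of the unipotent radical of a standard parabolic, this orbit is the star in $\Delta$ of a simplex whose cotype is determined by $\Sigma_i$. The hypothesis $\phi(xN_i \cap U) \subseteq \phi(x)N_i \cap U$ then asserts that $\phi$ respects both the $N_1$- and the $N_2$-orbit decomposition of the big cell. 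The linear disjointness $\mathfrak{n}_1 \cap \mathfrak{n}_2 = 0$, the fact that $\mathfrak{n}_1$ and $\mathfrak{n}_2$ generate $\mathfrak{n}$ as a Lie algebra, and the non-degenerate bracket conditions (for every nonzero $X \in \mathfrak{n}_i$ there is $Y \in \mathfrak{n}_{3-i}$ with $[X,Y] \neq 0$) are then used to show that the two orbit decompositions together separate chambers finely enough to encode panel-adjacency of every cotype. Combined with the Zariski density of $\phi(U)$, which excludes degenerate collapses, this promotes $\phi$ to an injective chamber map $\Delta(U) \to \Delta$.

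Theorem \ref{maintheorem} now yields a unique extension to an injective chamber map $\widetilde{\phi} : \Delta \to \Delta$. The classification of such maps, as adapted in Section 2 from Chapter 5 of \cite{Tits74}, identifies $\widetilde{\phi}$ with the composition of an automorphism of $S$ coming from a $k$-algebraic automorphism of the adjoint group (up to the central scalar factor $\mathbb{C}^*\mathbf{1}$) with a field endomorphism $\psi : k \to k$. Pulling back along $\iota$ and using the Iwasawa identification $N \cong S/KA$ gives precisely the claimed description of $\phi$. The main obstacle in the plan is the middle paragraph: the passage from ``$\phi$ preserves the two $N_i$-orbit decompositions'' to ``$\phi$ is a chamber map''. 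Without continuity or smoothness, one must extract the type-preserving simplicial structure from the combinatorics of the root subsets $\Sigma_1, \Sigma_2$ together with the bracket non-degeneracy, the generation hypothesis, and the Zariski density of $\phi(U)$ alone; this is the point at which the hypotheses on $\mathfrak{n}_1$ and $\mathfrak{n}_2$ must be fully exploited to rule out exotic orbit-preserving maps that are not simplicial.
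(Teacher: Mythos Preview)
Your overall strategy coincides with the paper's: embed $N$ as the big cell of $\Cham\Delta(S,k)$, show that $\phi$ becomes a chamber map on $\Delta(U)$, and invoke Theorem~\ref{maintheorem} and the Tits classification. You also correctly isolate the only nontrivial step, namely the passage from ``$\phi$ preserves the two $N_i$-coset foliations'' to ``$\phi$ is a chamber map.'' What you leave as an unfilled obstacle is exactly what the paper supplies, and it does so with two concrete moves that your outline lacks.

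First, the paper does not treat ``$\Sigma_i$ is the root set of the unipotent radical of a parabolic'' as a model case: it asserts directly that under the hypotheses $P_i = KAN_i$ \emph{are} parabolic subgroups of $S$, so that $N_i$-cosets in the big cell correspond to simplices of two fixed types in $\Delta$. Second --- and this is the missing mechanism --- the paper exploits the bracket non-degeneracy condition in a specific combinatorial way: for any parabolic $Q\subseteq P_1$, the image in $S/P_2$ of a coset $gQ$ can be obtained as the intersection of the images in $S/P_2$ of a \emph{finite} family of cosets $g_jP_1$, and symmetrically with $P_1,P_2$ interchanged. This ``intersection of projections'' device shows that preserving the $P_1$- and $P_2$-coset structures forces preservation of the coset structure of every parabolic contained in either, hence of all parabolic cosets; that is precisely what it means for $\phi$ to induce a chamber map on $\Delta(U)$. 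Once you have that, Theorem~\ref{maintheorem} and the classification finish the argument exactly as you describe. So your plan is correct but incomplete at the point you yourself flag; the content to add is the intersection-of-projections argument driven by the commutator hypothesis.
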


\begin{proof} Under the hypotheses of the theorem, there must exist two parabolic subgroups of $S$, $P_{1}=KAN_{1}$ and $P_{2}=KAN_{2}$, such that $\phi$ induces a mapping from an open connected set in the family of left cosets of $P_{1}$ into the family of left cosets of $P_{1}$, and similarly with $P_{2}$. Now, by the condition on commutators of elements of $\mathfrak{n_{1}}$ and $\mathfrak{n_{2}}$, the image of any left coset of a parabolic subgroup contained in $P_{1}$ under projection to $S/P_{2}$ can be obtained as the intersection of the projections of a finite family of sets of the form $gP_{1}$ to $S/P_{2}$, and similarly with the roles of $P_{1}$ and $P_{2}$ reversed. We may conclude that $\phi$ induces a mapping from an open connected subset of $S/KA$ to itself whose range is Zariski dense and which preserves the images in $S/KA$ of left cosets of parabolic subgroups. The result now follows from theorem \ref{maintheorem}. \end{proof}

A version of this theorem appears in \cite{Ottazzi11} which requires smoothness. This result can be used to obtain a classification-free and unified version of many of the rigidity results in Yamaguchi \cite{Yamaguchi93}.

\end{document}